\newtheorem{theorem}{Theorem}[section]
\newtheorem{remark}{Remark}
\newtheorem{example}{Example}[section]
\title{Koenigs functions in the subcritical and critical Markov branching processes with Poisson probability reproduction of particles }
\begin{document}

\author{
	Penka Mayster$^{1}$ \\
	Assen Tchorbadjieff$^{1,2}$\thanks{Corresponding author. ORCID: 0000-0001-9322-262X}
}

\date{}

\maketitle

\begin{center}
	$^{1}$Institute of Mathematics and Informatics, Bulgarian Academy of Sciences, \\
	Acad. G. Bonchev Str., Bloc 8, 1113 Sofia, Bulgaria \\
	\vspace{0.2cm}
	$^{2}$Centre of Excellence in Informatics and Information and Communication Technologies, \\
	1113 Sofia, Bulgaria \\
	\vspace{0.2cm}
	Email: penka.mayster@math.bas.bg, atchorbadjieff@math.bas.bg
\end{center}

\maketitle

\begin{abstract}
Special functions have always played a central role in physics and in mathematics, arising
as solutions of nonlinear differential equations, as well as in the theory of branching processes, which extensively uses probability generating functions. The theory of iteration of real functions leads to limit theorems for the discrete-time and real-time Markov branching processes.
The Poisson reproduction of particles in real time is analysed through the integration of the Kolmogorov equation. These results are further extended by employing graphical representations of Koenigs functions under subcritical and critical branching mechanisms. The limit conditional law in the subcritical case and the invariant measure for the critical case are discussed, as well. The obtained explicit solutions contain the exponential Bell polynomials and the modified exponential-integral function $\rm{Ein} (z)$.
\end{abstract}

\section{Introduction}

The theory of branching processes describes the stochastic evolution of particle systems by the phenomena of reproduction, multiplicity, and extinction. Key applications in physics, particularly in high-energy particle interactions and neutron chain-reactions, are developed in the works \cite{H}, \cite{P}, \cite{DCh}. There are many applications in biology, as well,  \cite{MC},  \cite{JP},  \cite{KM}, \cite{HJV}. 

The Galton-Watson (GW) branching is a discrete-time process, which models the number of particles in each generation. This stochastic process, denoted $Z_n$ for $n=0,1,2,...$ begins with $Z_0=1$ and evolves according to the recurrence relation:
$$Z_{n+1}=\sum^{Z_n}_{j=1}\xi_j,\quad Z_0=1,\quad Z_1=\xi, $$
where $ \xi _{j}$ is a set of independent and identically-distributed (iid) random variables (rv), following the same probability distribution as the random variable $\xi:=Z_1 $. Each $\xi$ represents the offspring number of a single particle, and reproduction occurs independently among particles. In particular, the discrete-time model does not consider particle lifetimes. The probability generating functions (pgf) of the sequence of random variables  $ (Z_1, Z_2,..., Z_n,...)$ are related by the iterative equations:
$$H_1(s)=H(s),\quad H_{2}(s)=H(H(s)),\quad H_{n+1}(s)=H(H_{n})(s),\quad |s|\leq 1. $$
The theory of such iterative functional equations is studied in the book \cite{Ku}.
The iteration of the exponential function - specifically the pgf of the Poisson distribution - has attracted the attention of  Paul L\'{e}vy  (1927) and E.M. Wright (1947), see \cite{W} and references therein.
The problem of the iteration of an analytic function is closely related to that of finding an analytic solution of the equations due to Abel and Schr\"{o}der.
The imbeddability of discrete time composition semigroup, $(H_{n}(s), n=1,2,...)$ to the continuous time semigroup
$$F(t+\tau,s)=F(t,F(\tau,s)),\quad t>0,\quad \tau>0,\quad F(0,s)=s,$$
is considered in \cite{KMc} and developed in \cite{El} and \cite{VG}.
The Schr\"{o}der's and Abel's, \cite{Sch}, equations appear in a natural way
in the theory of branching processes, \cite{H}, where the solution of the Kolmogorov equation is expressed by the compositional inverse of the Koenigs function, \cite{Ko}.
The Schr\"{o}der's equation, is an eigenvalue equation for the composition-function operator.
Thus,  the eigenvalue is given by $ M(t)=F'_s(t,1)$. The eigenvector is known as Koenigs function.

The Markov branching process (MBP) $X(t), t \geq 0, X(0)=1,$ is a continuous-time model studying the evolution of a population in real time $t>0 $ under the assumption that the lifetime of particles is exponentially distributed with parameter $K>0$, see \cite{AN}, \cite{S}. This way, in any positive time $t>0$ there is a family of particles of all generations with positive probability.
The offspring number is denoted by the integer-valued rv $\eta$ with the following pgf, $h(s)=E[s^\eta]$.
The first derivative of the reproduction pgf $ h'(1)=m$ is the basic parameter in the theory of branching processes.
The constant $m:=E[\eta]$ defines the offsprings mean. The reproduction is classified as subcritical if $0< m<1$, respectively, critical and supercritical, if $m=1$ or $m>1$.
The ultimate extinction probability
\begin{equation}
	q:=\lim_{t\rightarrow \infty} \textbf{P}(X(t)=0)\label{qq}
\end{equation}
is defined by the minimal solution of the equation
\begin{equation}
	h(s)=s,\quad s\geq 0, \quad h(1)=1.\label{q}
\end{equation}
Remark, that for the critical and subcritical cases the value $s=1$ is the minimal solutions of $h(s)=s$ and $F(t,s)=s$. For the supercritical case, the ultimate extinction probability (\ref{qq}), and (\ref{q}) and the value $s=1$, are the fixed points, (known as Denjoy-Wolfs points), for
$$F(t,1)=1,\quad F(t,q)=q,\quad t>0,\quad 0\leq q \leq1. $$
The pgf of the number of particles alive at the time $t>0$ defined as
\begin{equation}
	F(t,s)=\sum^\infty_{k=0}s^k P(X(t)=k|X(0)=1)\label{F}
\end{equation}
yields the backward Kolmogorov equation with initial condition
\begin{equation}
	\frac{\partial}{\partial t}\left(F(t,s)\right)=f\left(F(t,s)\right),\quad f(s)=K(h(s)-s),\quad F(0,s)=s. \label{CKs}
\end{equation}
The infinitesimal generating function of the MBP defined by (\ref{F}) is given by the function $ f(s)$.
In particular, for the linear birth-death process, geometric and Poisson reproduction,
$$ h(s)=1-p+ps^2,\quad h(s)=\frac{1}{1+m-ms},\quad h(s)=e^{-\lambda(1-s)} .$$
The equation (\ref{CKs}) is nonlinear, of the type separate differentials, due to the time-homogeneous property (Fig. \ref{fig:hs}).
\begin{figure}[t]
	\includegraphics[scale=.5]{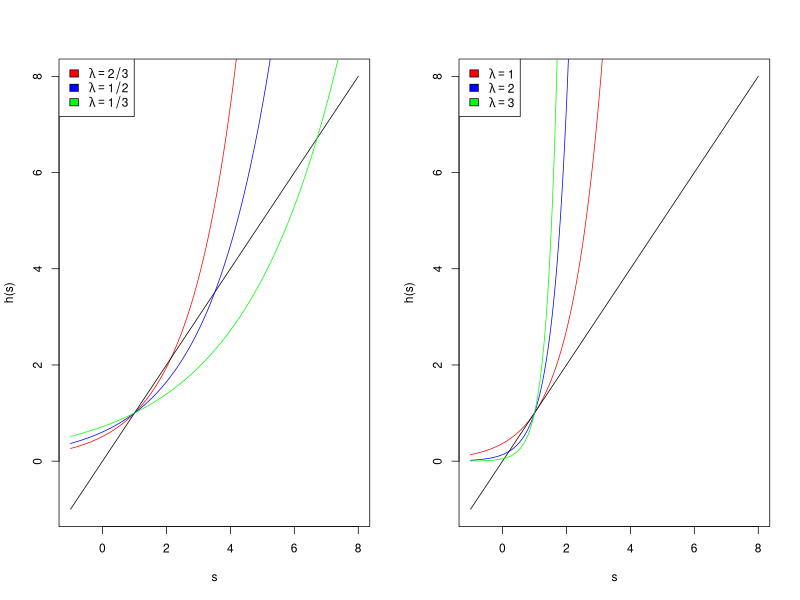} 
	\caption{%
		Graphics of h(s) for sub-, critical and supper- critical branching processes. The subcritical BP of $\lambda = 1/3, 1/2, 2/3$ is plotted on the left side. The critical ($\lambda=1$) and supercritical ($\lambda=2, 3$) are on the right side.
	}\label{fig:hs} 
\end{figure}
It is equivalent to the following ordinary differential equation
\begin{equation}
	\frac{f'(1) dx}{f(x)}=f'(1)dt,\quad x=F(t,s),\quad F(0,s)=s. \label{Ks}
\end{equation}
The implicit solution for the pgf $F(t,s) $ to the Kolmogorov equation (\ref{Ks}, \ref{CKs})  is defined by integral
$$\int \frac{ dx}{h(x)-x}. $$
The bounds  of integration must be defined in consideration of the real roots of the equation $h(s)= s$,  and the domain of definition for the pgf  $h(s)$
following the main characteristics of reproduction, such as the offspring's mean $ m=h'(1)$ and ultimate extinction probability $0<q<1$.
The mathematical expectation of the number of particles alive at the positive time $ t>0 $ has the following exponential decreasing, increasing, or constant behaviour,
$$ E[X(t)]:=M(t)=\exp\{f'(1)t\}, \quad t>0 , \quad f'(1)=K(m-1)<0,=0,>0 .$$
The discrete-time GW branching process generated by a geometric distribution is imbeddable in a continuous-time MBP with a quadratic infinitesimal generating function, i.e., the so-called \textbf{birth}-death process and Yule-Furry process, see \cite{KMc}. The continuous-time MBP with geometric branching is studied in \cite{TMT}, \cite{TM}, \cite{TMM}. The inverse of the Koenigs function in non-critical cases are expressed by the Fox-Wright function $_1\Psi_1(\alpha,a;\beta,b;s) $,  and by Lambert-W function in the critical case. The series expansion of the solution $F(t,s)$ in the neighbourhood of the point $s=1$ is based on the hyperharmonic  numbers.

The discrete-time GW branching process generated by the Poisson distribution can not be imbedded in a continuous-time MBP, see \cite{KMc}. This is our motivation to study the Poisson branching reproduction in real time.

\section{Poisson probability reproduction in real time}
The pgf of the Poisson law and the mathematical expectation are defined as
$$ h(s)=e^{-\lambda(1-s)}=e^{\lambda(s-1)},\quad h'(s)=\lambda h(s),\quad  |s|\leq 1 ,\quad m=h'(1)=\lambda.$$
The  Denjoy-Wolfs points solving the equation $ s=e^{-\lambda(1-s)}$ are defined by the Lambert-W function as follows,
$$ -\lambda s e^{-\lambda s}=(-\lambda) e^{-\lambda}, \quad (-\lambda) q=W(-\lambda e^{\lambda}),\quad -1=W(-e^{-1}). $$
The infinitesimal generating function $f(s)$ and its reciprocal are writen as
$$f(s)=K(e^{-\lambda (1-s)}-s),\quad \frac{K}{K(e^{-\lambda}e^{\lambda s}-s)}= \frac{e^\lambda}{1+(e^{\lambda s} -1-se^{\lambda})}. $$
The implicit solution for the pgf $F(t,s) $ to the Kolmogorov equation is written in the form of the Abel's and Schr\"{o}der's equations. We introduce the main notations for the Koenigs functions  respectively following the critical parameter $\lambda<1, \lambda=1,  \lambda>1$.

\textit{Subcritical case} Let $0<\lambda<1$ and $ 0<s<1$.
The Abel's equation is written as follows,
$$ A(F(t,s))=f'(1)t +A(s),\quad f'(1)=K(\lambda-1)<0,\quad A(s)=\int^s_{x=0} \frac{ (\lambda -1)dx}{e^{-\lambda}e^{\lambda x}-x},$$
and in the neighborhood  of the point $s=1$ we consider
$$\frac{ f'(1)}{f(x)}=\frac{1}{(x-1)(1-g(x))},\quad g(x)=\frac{\lambda}{1-\lambda}\sum^\infty_{n=1}\frac{(\lambda)^n}{n+1} \frac{(x-1)^n}{n!}.$$
The Schr\"{o}der's equation is written as follows,
$$B(F(t,s))=e^{K(\lambda-1)}B(s), \quad B(s)=e^{A(s)}. $$
The Koenigs function $B(s)$
yields the following relation on their logarithmic derivative
\begin{equation}
	\frac{B'(s)}{B(s)}=\frac{\lambda-1}{h(s)-s}<0,\quad B(0)=1,\quad B(1)=0. \label{BB}
\end{equation}
The function $F_\ast(s)=1-B(s)$ is a pgf of the limit conditional law (LCL). The probability mass function of the LCL is defined by
derivatices $F^{(n)}_\ast(0)=-B^{(n)}(0)$.

\textit{Critical case} Let $\lambda=1, h(s)=e^{s-1}$ and $ 0<s<1$.
The Abel's equation is written by the function $ U(s)$,
$$ U(F(t,s))=Kt +U(s),\quad U(s)=\int^s_{x=0} \frac{K dx}{f(x)}=\int^s_{x=0} \frac{dx}{e^{x-1}-x},\quad f'(1)=0,$$
and in the neighborhood  of the point $s=1$ we consider
\begin{equation}
	\frac{ K}{f(x)}=\frac{2}{(x-1)^2(1-2\varphi(x))},\quad \varphi(x)=\sum^\infty_{n=1}\frac{1}{(n+1)(n+2)} \frac{(x-1)^n}{n!}. \label{C}
\end{equation}
The Schr\"{o}der's equation is written as follows,
$$C(F(t,s))=e^{Kt}C(s), \quad C(s)=e^{U(s)}. $$
The function $U(s)$ is a power series of nonnegative coefficients. The sequence $$\frac{U^{(n)}(0)}{n!},\quad n=0,1,2,..., $$
represents the stationary measure for $X(t), t>0$, see Harris, p.102-112, \cite{H} and Pakes \cite{AP}.

\textit{Supercritical case} Let $\lambda>1$ with $$ q<s<1,\quad q=e^{\lambda(q-1)}, \quad f'(q)= K(q \lambda-1)<0,$$
and
$$\frac{ f'(q)}{f(x)}=\frac{1-q\lambda}{(1-\lambda)(x-1)(1-g(x))},\quad g(x)=\frac{\lambda}{1-\lambda}\sum^\infty_{n=1}\frac{\lambda^n}{n+1} \frac{(x-1)^n}{n!}.$$
where $$\frac{1-q\lambda}{(1-\lambda)}<0,\quad \lambda>1,\quad q \lambda<1,\quad 0<q<\frac{1}{\lambda}<1.$$
and the Koenigs function $Q(s)$ when  $0<q<1$ is
$$\lambda>1, \quad \log Q(s)=\int^s_{x=q} \frac{ f'(q)dx}{f(x)},\quad f'(q)=K(q\lambda-1)<0,\quad 0<q=e^{-\lambda(1-q)}<1. $$
We study in parallel, and in comparison, the subcritical and critical MBP. The supercritical case merits special consideration for asymptotic,
as for the geometric reproduction, \cite{TMM}.

The explicit solutions for the MBP with Poisson reproduction contain the exponential Bell polynomials, \cite{Fe}, \cite{WW}, and modified exponential-integral function $\rm{Ein} (z)$. The function
$\rm{Ein}(x)$ is an entire function, related to the exponential generating function of the harmonic numbers $H_n$:
$$ \rm{Ein}(x)=e^{-x}\sum^\infty_{n=1}H_n \frac{x^n}{n!}, \quad H_n=1+\frac{1}{2}+\frac{1}{3}+....\frac{1}{n} .$$
The special functions exponential-integral $ \rm{Ei}(x), x \neq 0,$ and modified exponential integral $ \rm{Ein}(x)$ are defined as
$$\rm{Ei}(x)=\int^x_{-\infty}\frac{e^y dy}{y}, \quad \rm{Ein}(x)=\int^x_0\frac{(1-e^{-y})dy}{y}=\sum^\infty_{k=1}\frac{(-1)^{k+1} x^k}{k.k!},$$
$$ \rm{Ei} (x)=\gamma+\log (|x|) - \rm{Ein} (-x).$$
We remark the following relation with the famous Euler-Mascheroni constant, given by the value of Digamma function
$$\gamma=-\psi(1):=-\Gamma'(1) \approx 0.5772156649015328606065. . ..$$
The number $e$, that is the base of the natural logarithm and exponential function, alternatively, $e$ can be called Napier's constant after John Napier,
$e=2.7182818284... $,  (see \cite{JC}).

The following sub-section $2.1.$ gives the numerical evaluation and graphics of the Koenigs functions $ B(s)$ and $C(s) $ based on the recurrence relations to their derivatives at $s=0$ and $s=1$.
In Section $3$, we study the solutions of the Abels equation $A(s) $  presented by integral of the reciprocal power series in the neighbourhood of the point $s=0$.
In the sub-section $3.2.$, the solutions of the Koenigs function $B(s)$  in the neighborhood of the point $s=1$ is presented in the form
$$B(s)=(1-s)e^{G(s)},\quad G(s)=\sum^\infty_{n=1}\frac{(\lambda(s-1))^n-(-\lambda)^n}{n}\sum^n_{k=1}\left(\frac{\lambda}{1-\lambda}\right)^k \frac{k!}{n!}B_{n,k}(g_\bullet) $$
where the sequence
$$g_\bullet=\left(g_1=\frac{1}{2}, g_2=\frac{1}{3},..., g_n=\frac{1}{n+1},...\right). $$
The Section $4$ is devoted  to the critical case.  In the neighborhood of the points $ s=0$, the solution is represented as $ C(s)=e^{U(s)}$, and in the neighborhood of the point $ s=1$, as integral of  (\ref{C} ).

\subsection{Recurrence relations for the Koenigs functions derivatives when $0<\lambda<1 $ and $\lambda=1,$ }
The logarithmic derivative (\ref{BB})  contains the main information for the behaviour of the MBP. The consecutive derivatives of them, bring us the recurrence relation, as follows.

$\bullet $	Let $ 0<\lambda<1$.
For the Poisson reproduction law, in the neighborhood of the point $s=0$, the pgf of branching is
$$h(s)=e^{-\lambda}e^{\lambda s}=e^{-\lambda}\sum^\infty_{k=0}\frac{\lambda^k}{k!}s^k, \quad h(0)=e^{-\lambda}, \quad h'(0)=e^{-\lambda}\lambda, $$
 $$ h^{(k)}(0)=e^{-\lambda}\lambda^k,\quad k=2,3,...,$$
and the recurrence relation is specified as
\begin{equation}\label{eq:BS}
	B^{(n+1)}(0)e^{-\lambda}=B^{(n)}(0)e^{-\lambda}\{(\lambda+n-1)e^\lambda-n\lambda\}-\sum^n_{k=2}\binom{n}{k}B^{(n+1-k)}(0)e^{-\lambda}\lambda^k.
\end{equation}
We calculate the Taylor series expansion in the neighborhood of $s=0$ with $ B(0)=1$ as
$$ B(s)=1+\sum^\infty_ {n=1}B^{(n)}(0)\frac{s^n}{n!},\quad B'(0)=(\lambda-1)e^\lambda<0, \quad B''(0)= B'(0)\lambda\{e^\lambda -1\},$$
$$B^{(3)}(0)=B'(0)\lambda \{(1+\lambda)e^{2\lambda}-(1+3\lambda)e^\lambda +\lambda \}, ....$$
The function $B(s)$ is decreasing, concave and all its derivatives $B^{(k)}(s)$ are negative in the interval, $ 0<s<1$ (see fig. \ref{fig:Bs}). It is worth noting that for $ 0<\lambda <1/6$, the graphic of the function  $B(s)$ is  almost equal to $1-s$. 
\begin{figure}[t]
	\includegraphics[scale=.5]{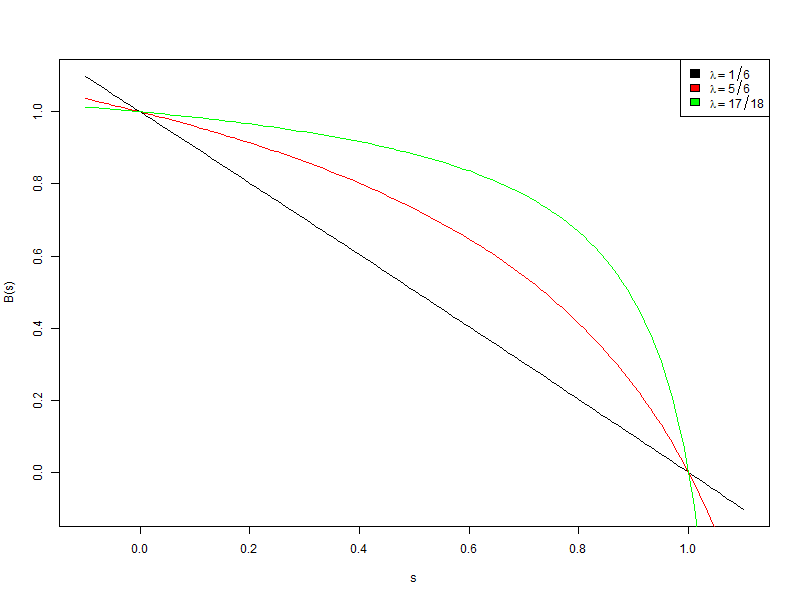} 
	\caption{%
		Graphics of the function $B(s)$ using eq. \ref{eq:BS} for $\lambda =1/6$ (black), $\lambda =5/6$ (red) and $\lambda =17/18$ (green).  
	}\label{fig:Bs} 
\end{figure}

The pgf of the LCL, $F_\ast (s) = 1-B(s)$, is increasing, convex, and defines the probability mass function  $(f_1, f_2,...,) $ as follows,
$$F_\ast (0)=0,\quad F_\ast(1)=1,\quad F_\ast(s)=\sum_{n=1}^{\infty}f_n s^n,\quad  f_n=\frac{-B^{(n)}(0)}{n!},$$
$$ f_1= (1-\lambda)e^\lambda,\quad f_2= \lambda (e^\lambda-1)\frac{f_1}{2},....$$
(see Figure \ref{fig:Pstar})

\begin{figure}[t]
	\includegraphics[scale=.5]{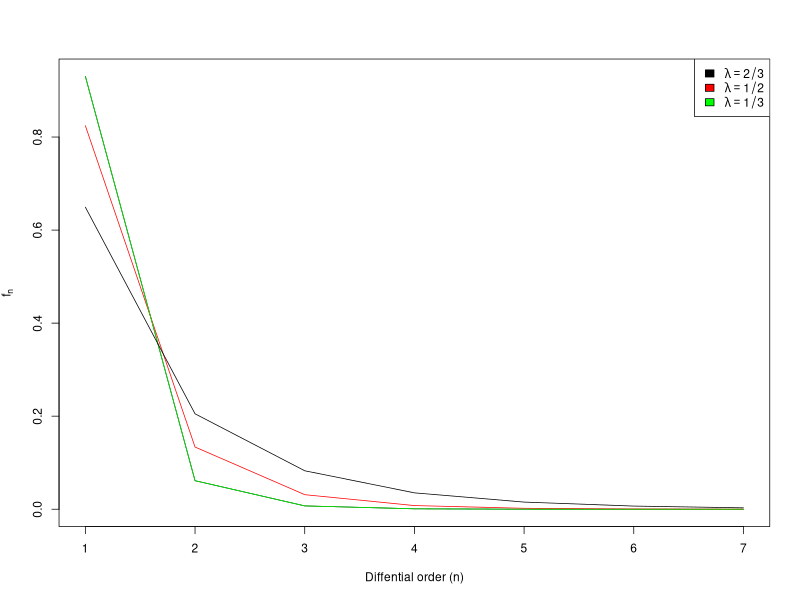} 
	\caption{%
		Histogram of $ f_n=\frac{-B^{(n)}(0)}{n!}$ for different $\lambda$.
	}\label{fig:Pstar} 
\end{figure}

For the subcritical Poisson reproduction law, $0<\lambda <1$, the series expansion in the neighbourhood the point $s=1$
$$h(s)=e^{\lambda (s-1)}=\sum^\infty_{k=0}\frac{\lambda^k}{k!}(s-1)^k, \quad h^{(k)}(1)=\lambda^k. $$
We start with $B(1)=0$. Then, using the representation introduced in the following section
$$ B(s)=(1-s)e^{G(s)},\quad B'(s)=-e^{G(s)}+(1-s)G'(s),  $$
we define
$$B'(1)=-e^{G(1)}<0,\quad G(1)=\sum^\infty _{n=1}\frac{(-1)^{n-1}\lambda^n}{n}\sum^n_{k=1}\left(\frac{\lambda}{1-\lambda}\right)^k \frac{k! }{n!}B_{n,k}(g_\bullet)>0.$$
And for $ n=2,3,...$ we have $h^{(k)}(1)=\lambda^k$ 	and ,
$$B^{(n)}(1)(n-1)(1-\lambda)=\sum^n_{k=2}\binom{n}{k}B^{(n+1-k)}(1)\lambda^k, $$
$$n=1,\quad B'(1)(\lambda-\lambda)=0,\quad B'(1)=-e^{G(1)}<0,\quad B(0)=e^{G(0)},\quad G(0)=0,$$
$$n=2,\quad B''(1)(1-\lambda)=B'(1)\lambda^2,\quad B''(1)=B'(1) \frac{\lambda^2}{1-\lambda}, ...,.$$

$\bullet$ Let $ \lambda=1$.
The Koenigs function $C(s)$ in the critical MBP
yields the following relation on their logarithmic derivative
\begin{equation}
	\frac{C'(s)}{C(s)}=\frac{1}{h(s)-s}>0,\quad \lambda=1,\quad C(0)=1,\quad C(1)=\infty, \label{CC}
\end{equation}
For the critical Poisson reproduction law, $\lambda=1$, in the point $s=0$, the recurrence relation based on (\ref{CC}) is
$$C^{(n+1)}(0)=C^{(n)}(0)\{(n+1)e-n\}-\sum^n_{k=2}\binom{n}{k}C^{(n+1-k)}(0),\quad C'(0)=e, $$	
and in the neighborhood of the point $s=1$ we have some indefinite relation,
$$C^{(n+1)}(1)\{e- e\}=e\left\{C^{(n)}(1)-\sum^n_{k=2}\binom{n}{k}C^{(n+1-k)}(1)\right\}. $$
In the critical case, all derivatives tend to infinity in the neighbourhood of $s=1$, where
$$C(1)=\infty,\quad C'(1)=\infty,\quad C^{(n)}(1)=\infty, \quad n=2, 3,....$$
The function $C(s)$ is increasing and  convex on the interval $0<s<1$.
\section{Subcritical MBP $X(t)$, $0<\lambda<1$, }

\subsection{Integration of the reciprocal power series in the neighbourhood of $s=0$}

Let us denote the function $A(s)$ in the neighbourhood of the point $s=0$, as follows,
$$A(s):=\log B(s),\quad B(s)=\exp(A(s))>0,\quad A(s):=\int^s_{x=0} \frac{( \lambda-1) dx}{h(x)-x},\quad 0\leq s\leq 1. $$
The reciprocal power series is calculated in the following notations,
$$\frac{1}{h(s)-s}=\frac{e^\lambda}{e^{\lambda s}-se^\lambda}=\frac{e^\lambda}{1+p(s)}=e^\lambda\{1+a(s)\}. $$
The functions $p(s)$ and $a(s)$ are defined by their derivatives at the point $s=0$,
$$p(s)=e^{\lambda s}-(1+se^\lambda)=\sum^\infty_{n=1}p_n\frac{s^n}{n!},\quad a(s)= \sum^\infty_{n=1}a_n\frac{s^n}{n!},\quad p(0)=0,\quad a(0)=0.$$
The general Leibniz rule applied on the equality $\{1+p(s)\}\{1+a(s)\}=1 $ gives the following recurrence relation for derivatives at the point
$s=0$ in the following notations, $p_0=1+p(0)=1$ and $a_0=1+a(0)=1$, $n=1, 2, 3,...,$
$$\sum^n_{k=0}\binom{n}{k}p_k a_{n-k}=0,\quad p_0=a_0=1,\quad p_1=\lambda-e^\lambda<0, \quad p_2=\lambda^2,...,\quad p_n=\lambda^n,... .$$
In particular,
$$a_1=-p_1=e^\lambda-\lambda>0,\quad a_2=-p_2 +2(p_1)^2,\quad a_3=-\lambda^3+3!p_1 \lambda^2-3!(p_1)^3. $$
Also, the reciprocal power series is represented by the exponential Bell polynomials over the function
$$p(s)=(\lambda-e^\lambda)s+\lambda^2 \frac{s^2}{2!}+ \lambda^3 \frac{s^3}{3!}+...,$$
as follows,
$$\frac{e^\lambda}{1-(-p(s))}= e^\lambda \left(1+\sum^\infty_{n=1}\sum^n_{k=1}B_{n,k}(p_\bullet)(-1)^k k!\frac{ s^n}{n!}\right).$$
The function $p(s)$ is the exponential generating function of the sequence
$$ p_\bullet=(\lambda-e^\lambda, \lambda^2,\lambda^3,...)=(\lambda,\lambda^2,...)+(-e^\lambda,0,0,...) .$$
The exponential Bell polynomials of the sum of two sequences is defined by
$$B_{n,k}(x+y)=\sum^k_{j=0}\sum^n_{i=0}\binom{n}{i}B_{i,j}(x)B_{n-i,k-j}(y). $$
In particular, when the sequence $ y=(-e^\lambda,0,0,...)$, then
$$B_{n-i,n-j}(y)=(-e^\lambda)^{n-i},\quad n-i=k-j,  $$
otherwise, $ B_{n-i,n-j}(y)=0$, (see \cite{CM}).

Then, we have two equivalent representations,
$$-a_n=\sum^n_{k=1}\binom{n}{k}p_k a_{n-k},\quad a_n=\sum^n_{k=1}B_{n,k}(p_\bullet)(-1)^k k!.$$
\begin{example}\label{Example1} For instance, the table of $ B_{n,k}(p_\bullet), n=5$ for any values $0<\lambda\leq 1$,  is given by
\begin{equation}
\label{eq:lambda-matrix}
\begin{pmatrix}
\lambda - e^{\lambda} & 0 & 0 & 0 & 0 \\
\lambda^{2} & (\lambda - e^{\lambda})^{2} & 0 & 0 & 0 \\
\lambda^{3} & 3(\lambda - e^{\lambda})\lambda^{2} & (\lambda - e^{\lambda})^{3} & 0 & 0 \\
\lambda^{4} & 4(\lambda - e^{\lambda})\lambda^{3} + 3\lambda^{4} 
& 6(1 - e^{\lambda})^{2}\lambda^{2} & (\lambda - e^{\lambda})^{4} & 0 \\
\lambda^{5} & 5(\lambda - e^{\lambda})\lambda^{4} + 10\lambda^{5} 
& 10(1 - e^{\lambda})^{2}\lambda^{3} + 15(1 - e^{\lambda})\lambda^{4} 
& 10(1 - e^{\lambda})^{3}\lambda^{2} & (\lambda - e^{\lambda})^{5}
\end{pmatrix}
\end{equation}
\end{example}

The integration of the series expantion, takes form
$$A(s)=(\lambda-1)e^\lambda\int^s_{x=0}\left\{1+\sum^\infty_{k=1}a_k \frac{x^k}{k!} \right\}dx =(\lambda-1)e^\lambda \left\{s+\sum^\infty_{k=2}a_{k-1} \frac{s^{k} }{k!} \right\}. $$
It means,
$$A(0)=0,\quad A'(0)=(\lambda-1)e^\lambda<0 ,\quad A^{(2)}(0)=(\lambda-1)e^\lambda (e^\lambda-\lambda)<0,..., $$
$$ A^{(n)}(0)= (\lambda-1)e^\lambda  a_{n-1}<0,\quad n=3, 4,....$$
Obviously, the function $A(s)$ is negative, decreasing in the interval $ 0<s<1$ and $\lim_{s\rightarrow 1}A(s)=-\infty $. The graphical representation is available in Figure \ref{fig:AS}.
\begin{figure}[t]
	\includegraphics[scale=.5]{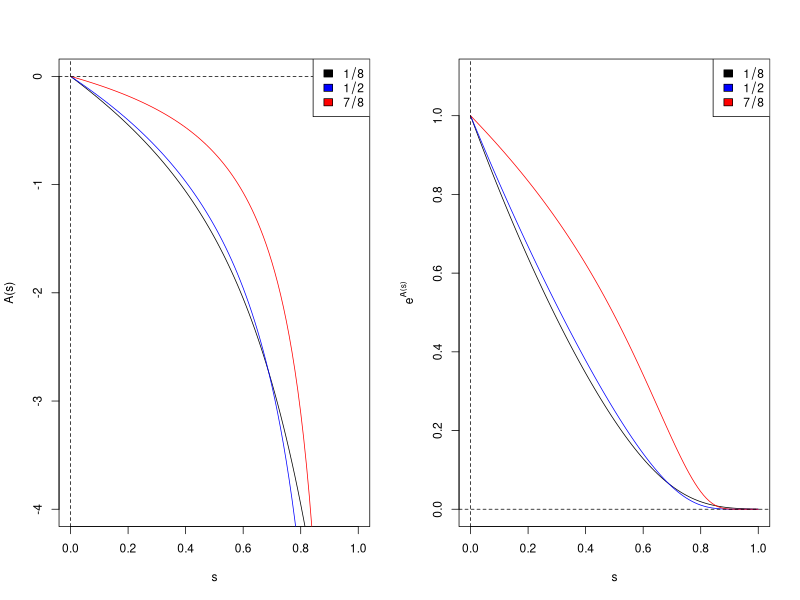} 
	\caption{%
		Graphics of $A(s)$ (left) and $B(s)=e^{A(s)}$ (right).
	}\label{fig:AS} 
\end{figure}

Using the Faa Di Bruno formula, we confirm the derivative of the Koenigs function $B(s)$ from the previous section.
$$B(s)=\exp(A(s)),\quad B^{(n)}(s)=\exp(A(s))\sum^n_{k=0}B_{n,k}(A'(s), A''(s),...). $$
In particular, $B(1)=0$ and $B(0)=1,$
$$ B'(0)=A'(0),\quad B''(0)=(A'(0))^2+A''(0),\quad B^{(3)}(0)=(A'(0))^3+3A'(0)A''(0)+A^{(3)}(0),.... $$
\begin{theorem}
	Let the function $$A(s)=\int^s_{x=0} \frac{ (\lambda -1)dx}{e^{-\lambda}e^{\lambda x}-x}.$$
	The explicit form of the function $A(s)$ for $0<\lambda<1$ is represented by the series expantion
	$$A(s)=(\lambda-1)e^\lambda\sum^\infty_{k=0}(-1)^k\sum^k_{j=0}\binom{k}{j}{e^{\lambda j s}}(-1)^{k-j} \sum^{k-j}_{i=1}\left(\frac{e^{\lambda} }{\lambda j}\right)^i (1+se^\lambda)^{k-j-i}[k-j]_{i\downarrow}$$
	$$-(\lambda-1)e^\lambda\sum^\infty_{k=0}(-1)^k\sum^k_{j=0}\binom{k}{j}(-1)^{k-j} \sum^{k-j}_{i=1}\left(\frac{e^{\lambda} }{\lambda j}\right)^i [k-j]_{i\downarrow}.$$
	The  increasing and decreasing factorials, are defined by
	$$[x]_{k\downarrow}=\frac{\Gamma(x+1)}{\Gamma(x+1-k)},\quad [x]_{k\uparrow}=\frac{\Gamma(x+k)}{\Gamma(x)}. $$
\end{theorem}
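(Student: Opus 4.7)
The plan is to expand the integrand of $A(s)$ as a geometric-type series about $x=0$, apply the binomial theorem, and integrate each resulting exponential-polynomial term by repeated integration by parts.

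First, I would factor the denominator. Since $h(x)=e^{-\lambda}e^{\lambda x}$,
\begin{equation*}
h(x)-x=e^{-\lambda}\bigl(e^{\lambda x}-xe^{\lambda}\bigr)=e^{-\lambda}(1+p(x)),
\end{equation*}
where $p(x):=e^{\lambda x}-1-xe^{\lambda}$ is exactly the function introduced in Section~3.1 and satisfies $p(0)=0$. Hence, on the subinterval $[0,s]$ where $|p(x)|<1$,
\begin{equation*}
\frac{\lambda-1}{h(x)-x}=(\lambda-1)e^{\lambda}\sum_{k=0}^{\infty}(-1)^{k}p(x)^{k}.
\end{equation*}
Applying the binomial theorem to $p(x)^{k}=\bigl(e^{\lambda x}-(1+xe^{\lambda})\bigr)^{k}$ yields
\begin{equation*}
p(x)^{k}=\sum_{j=0}^{k}\binom{k}{j}(-1)^{k-j}e^{\lambda j x}(1+xe^{\lambda})^{k-j},
\end{equation*}
which already reproduces the outer double-sum skeleton $(-1)^{k}\binom{k}{j}(-1)^{k-j}e^{\lambda js}$ of the claimed formula.

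Second, I would integrate each term $\int e^{\lambda j x}(1+xe^{\lambda})^{k-j}\,dx$ for $j\geq 1$ by iterated integration by parts, taking $dv=e^{\lambda j x}\,dx$ and $u=(1+xe^{\lambda})^{m}$ (with $m=k-j$) at each pass, always differentiating the polynomial factor and antidifferentiating the exponential. Each pass contributes one factor $\frac{1}{\lambda j}$ from the antiderivative of $e^{\lambda jx}$ and one factor $e^{\lambda}\cdot(m-\ell)$ from the chain rule on $(1+xe^{\lambda})^{m-\ell}$. After $i$ passes the polynomial power drops to $k-j-i$ and the accumulated coefficient equals $\bigl(\tfrac{e^{\lambda}}{\lambda j}\bigr)^{i}[k-j]_{i\downarrow}$; the recursion terminates at $i=k-j$ when the polynomial is exhausted. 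This reproduces the stated inner sum $\sum_{i=1}^{k-j}\bigl(\tfrac{e^{\lambda}}{\lambda j}\bigr)^{i}(1+xe^{\lambda})^{k-j-i}[k-j]_{i\downarrow}$. Evaluating between $0$ and $s$ yields the two-line form of the theorem: the first displayed sum is the antiderivative at $x=s$, and the subtracted second sum is its value at $x=0$, where $e^{\lambda j\cdot 0}=1$ and $(1+0\cdot e^{\lambda})^{k-j-i}=1$.

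The main obstacles I anticipate are threefold. (i) Justifying the termwise interchange of sum and integral: this reduces to uniform convergence of $\sum(-1)^{k}p(x)^{k}$ on $[0,s]$, which follows from $p(0)=0$ and continuity of $p$, provided $s$ is strictly less than the first positive fixed point of $h$. (ii) Bookkeeping the combinatorics of the iterated integration by parts: one must carefully track signs at each step and identify the accumulated coefficient as the falling factorial $[k-j]_{i\downarrow}$; this step should be cross-checked against $k=1,2$ to pin down any sign conventions. (iii) The marginal case $j=0$ is degenerate, since the formal $\frac{1}{\lambda j}$ is undefined; the corresponding contribution is the elementary integral $\int_{0}^{s}(1+xe^{\lambda})^{k}\,dx=\tfrac{(1+se^{\lambda})^{k+1}-1}{(k+1)e^{\lambda}}$, and it must either be incorporated into the displayed formula by the convention that the empty inner sum ($i=1,\dots,0$) corresponds to this polynomial antiderivative, or split off and summed separately. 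Handling this edge case cleanly will be the most delicate step of the argument.
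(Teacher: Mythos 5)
Your proposal follows essentially the same route as the paper: rewrite the integrand as $(\lambda-1)e^{\lambda}\sum_{k\ge 0}(-1)^{k}p(x)^{k}$ with $p(x)=e^{\lambda x}-(1+xe^{\lambda})$, expand $p(x)^{k}$ by the binomial theorem, and evaluate $\int_{0}^{s}e^{\lambda j x}(1+xe^{\lambda})^{k-j}\,dx$ by iterated integration by parts starting from $d\bigl(e^{\lambda j x}/(\lambda j)\bigr)$, then subtract the value at $x=0$. The extra care you flag about termwise integration and the degenerate $j=0$ term is sound (the paper's own proof stops after the first integration by parts and does not address either point), but it does not change the method.
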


\begin{proof}
	In its explicit form, the function
	$$p(x)=e^{\lambda x}-(1+xe^\lambda) ,\quad (p(x))^k=\sum^k_{j=0} e^{\lambda j x} \binom{k}{j}(-1)^{k-j}(1+xe^\lambda)^{k-j}. $$
	Then the integral
	$$\log(B(s))=(\lambda-1)e^\lambda\sum^\infty_{k=0}(-1)^k\sum^k_{j=0}\binom{k}{j}\int^s_{x=0} e^{\lambda j x}(-1)^{k-j} (1+xe^\lambda)^{k-j}dx $$
	The integration by parts starting with
	$$\int^s_{x=0} e^{\lambda j x}(1+xe^\lambda)^{k-j}dx=\int^s_{x=0}(1+xe^\lambda)^{k-j}d \left(\frac{e^{\lambda j x} }{\lambda j}\right)$$
	shows the proofs.
\end{proof}
\subsection{Subcritical MBP in the neighborhood of $s=1$}
In the subcritical case, $0<\lambda<1$ and $ 0<s<1$,
$$\frac{ f'(1)}{f(x)}=\frac{\lambda-1}{e^{\lambda (x-1)}-x}=\frac{1}{(x-1)(1-g(x))},$$
where
$$g(x)=\frac{e^{\lambda(x-1)}-1-\lambda(x-1)}{(1-\lambda)(x-1)}=\frac{\lambda}{1-\lambda}\left(\frac{e^{\lambda( x-1)}-1}{\lambda(x-1)} -1\right)=\frac{\lambda}{1-\lambda}\sum^\infty_{n=1}\frac{(\lambda)^n}{n+1} \frac{(x-1)^n}{n!}, $$
$$g(0)=\frac{1-\lambda -e^{-\lambda}}{1-\lambda}=\frac{1}{1-\lambda}\sum^\infty_{j=2}\frac{(-\lambda)^j}{j!} ,\quad g(1)=0 . $$
The Taylor series expansion in the neighborhood of $s=1$ is denoted by its derivatives as
$$g(x)=\sum^\infty_{n=1}g^{(n)}(1) \frac{(x-1)^n}{n!},\quad  g^{(n)}(1)=\frac{\lambda}{1-\lambda}\frac{\lambda^n}{n+1},\quad g_n=\frac{1}{n+1}. $$
\textbf{The first method} is based on using the combinatorics identity and Bell polynomials for $ 0<\lambda <1$,
$$\log B(s)=\int^s_{x=0}\frac{dx}{(x-1)(1-g(x))}= \int^s_{x=0}\left\{\frac{1}{x-1}+\frac{1}{x-1}\sum^\infty_{k=1}(g(x))^k\right\}dx.$$
Denote the definite integral of the second fraction by the function $G(s)$, namely
$$ G(s)=\int^s_{x=0}\left\{\frac{1}{x-1}\sum^\infty_{k=1}(g(x))^k\right\}dx,\quad \log \frac{B(s)}{1-s}=G(s),\quad B(s)=(1-s)e^{G(s)}. $$
Remark that, the function $g(x)$ is the exponential generating function of the sequence of derivatives
$$(g'(1),\quad g''(1),...,g^{(n)}(1),...),\quad g^{(n)}(1)=\frac{\lambda}{1-\lambda}\frac{(\lambda)^n}{n+1}. $$
and the sequence of constantes is denoted by
$$g_\bullet=\left(g_1=\frac{1}{2}, g_2=\frac{1}{3},..., g_n=\frac{1}{n+1},...\right). $$
Then
$$B_{n,k}((g'(1),\quad g''(1),...,g^{(n)}(1),...))=\left(\frac{\lambda}{1-\lambda}\right)^k \lambda^n B_{n.k}(g_\bullet). $$
The power of any exponential generating function is given by the Exponential Bell polynomials as follows,
$$(g(x))^k=\left(\frac{\lambda}{1-\lambda}\right)^k k!\sum^\infty_{n=k}\lambda^n B_{n,k}(g_\bullet)\frac{(x-1)^n}{n!}. $$
The change of summation order in the double sum leads to
$$\frac{1}{1-g(x)}=1+\sum^\infty_{n=1}(\lambda)^n \frac{(x-1)^n}{n!}\sum^n_{k=1}\left(\frac{\lambda}{1-\lambda}\right)^k k!B_{n,k}(g_\bullet) .$$
The exponential Bell polynomials $B_{n,k}(g_\bullet)$ are given by the Stirling numbers of the second kind $S(n+j,j)$ as folows
$$B_{n,k}(g_\bullet)=\frac{n!}{(n+k)!}\sum^k_{j=0}(-1)^{k-j}\binom{n+k}{k-j}S(n+j,j). $$
see  \cite{Fe}, \cite{WW} .

\textbf{The second method} is based on using the reciprocal power series in the neighborhood of the point $s=1$  given as follows,
$$\frac{1}{1-g(s)}=1+b(s),\quad b(s)=\sum^\infty_{n=1}b_n \frac{(s-1)^n}{n!},\quad (1-g(s))(1+b(s))=1, $$
where
$$b(s)=\sum^\infty_{k=1}(g(s))^k, \quad g(s)=\sum^\infty_{n=1}g^{(n)}(1) \frac{(s-1)^n}{n!},\quad g^{(n)}(1) =\left(\frac{\lambda}{1-\lambda}\right)\left(\frac{\lambda^n}{n+1}\right). $$
The  derivatives of the equality  $(1-g(s))(1+b(s))=1 $ result in
$$\sum^n_{k=0}\binom{n}{k}(1-g(s))^{(k)} (1+b(s))^{(n-k)}=0,\quad n=1,2,... $$
Equivalently,
$$(1-g(s))b_n+\sum^{n-1}_{k=1}\binom{n}{k}(1-g(s))^{(k)} (1+b(s))^{(n-k)}+(-g^{(n)}(s))(1+b(s))=0,\quad n=1,2,... $$
and at the point $s=1$ we have
$$b_n-\sum^{n-1}_{k=1}\binom{n}{k}g^{(k)}(1)b_{n-k}+(-g^{(n)}(1))=0,\quad (1-g)^{(0)}(1) =1,\quad b_0=1,$$
and the recurrence relation is specified as
$$b_n=\sum^n_{k=1}\binom{n}{k}g^{(k)}(1) b_{n-k}:=\left(\frac{\lambda}{1-\lambda}\right)\sum^n_{k=1}\binom{n}{k}\left(\frac{\lambda^k}{k+1}\right) b_{n-k},$$
or equivalently calculated by combinatorics
$$ b_n=\lambda^n\sum^n_{k=1}\left(\frac{\lambda}{1-\lambda}\right)^k  k! B_{n,k}(g_\bullet).  $$
In particular,
$$ b_1=\lambda\left(\frac{\lambda}{1-\lambda}\right)\frac{1}{2}, \quad b_2=  \lambda^2\left(\frac{\lambda}{1-\lambda}\right)\left(\frac{1}{3}+ \frac{1}{2} \frac{\lambda}{1-\lambda}\right),$$
$$ b_3= \lambda^3 \left(\frac{\lambda}{1-\lambda}\right)\left(\frac{1}{4}+ \frac{\lambda}{1-\lambda}+\frac{3}{4}\left( \frac{\lambda}{1-\lambda}\right)^2 \right ). $$

\begin{theorem}
	The Taylor series expansion of the function $G(s)$ in the neighborhood of $s=1$ is writen as follows
	$$G(s)=\sum^\infty_{n=1}\frac{(\lambda(s-1))^n-(-\lambda)^n}{n}\frac{1}{n!}\sum^n_{k=1}\left(\frac{\lambda}{1-\lambda}\right)^k k!B_{n,k}(g_\bullet) $$
	where $G(0)=0$ and
	$$ G(1)=-\sum^\infty_{n=1}\frac{(-\lambda)^n}{n}\frac{1}{n!}\sum^n_{k=1}\left(\frac{\lambda}{1-\lambda}\right)^k k!B_{n,k}(g_\bullet) ,\quad G^{(n)}(1)=\frac{\lambda ^n}{n}\sum^n_{k=1}\left(\frac{\lambda}{1-\lambda}\right)^k k!B_{n,k}(g_\bullet).$$
	Equivalently,
	$$G(s)=G(1)+\sum^\infty_{n=1}\frac{b_n}{n}\frac{(s-1)^n}{n!},\quad G(0)=0,\quad G(1)= \sum^\infty_{n=1}\frac{(-1)^{n-1} b_n}{n. n!}>0.$$
	
\end{theorem}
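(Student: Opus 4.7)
The plan is to integrate termwise the reciprocal-power-series representation of $1/(1-g(x))$ already obtained in the preceding ``second method'' of this subsection, and then to extract the endpoint values and Taylor coefficients from the resulting double sum. Concretely, starting from the expansion
$$\frac{1}{1-g(x)}=1+\sum_{n=1}^{\infty}\lambda^{n}\frac{(x-1)^{n}}{n!}\sum_{k=1}^{n}\left(\frac{\lambda}{1-\lambda}\right)^{k}k!\,B_{n,k}(g_{\bullet}),$$
I subtract $1$ to get $\sum_{k\geq1}(g(x))^{k}=g(x)/(1-g(x))$, divide by $x-1$ to obtain a power series in $(x-1)^{n-1}$, and integrate term by term on $[0,s]$ via $\int_{0}^{s}(x-1)^{n-1}\,dx=((s-1)^{n}-(-1)^{n})/n$. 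This produces exactly the first displayed formula for $G(s)$.

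The endpoint and derivative identities are then immediate. Setting $s=0$ gives $(\lambda(s-1))^{n}=(-\lambda)^{n}$ so the brackets vanish termwise, confirming $G(0)=0$; setting $s=1$ kills the $(\lambda(s-1))^{n}$ pieces and leaves the stated value of $G(1)$. For the equivalent expansion I invoke the identity $b_{n}=\lambda^{n}\sum_{k=1}^{n}(\lambda/(1-\lambda))^{k}k!\,B_{n,k}(g_{\bullet})$ derived earlier by the reciprocal-series recurrence: substituting this factor regroups the first formula as $G(s)=G(1)+\sum_{n\geq1}(b_{n}/n)(s-1)^{n}/n!$, from which $G^{(n)}(1)=b_{n}/n$ is read off directly, and the alternating form $G(1)=\sum_{n\geq1}(-1)^{n-1}b_{n}/(n\,n!)$ follows by applying the same substitution to the $-(-\lambda)^{n}$ contributions.

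For the positivity $G(1)>0$ I argue from the integral representation $G(1)=\int_{0}^{1}g(x)/[(x-1)(1-g(x))]\,dx$ rather than from the alternating series. The convexity inequality $e^{y}>1+y$ for $y\neq0$, applied with $y=\lambda(x-1)<0$, makes the numerator $e^{\lambda(x-1)}-1-\lambda(x-1)$ strictly positive on $(0,1)$; combined with the signs $1-\lambda>0$ and $x-1<0$, this forces $g(x)<0$ and $1-g(x)>1$ on $(0,1)$. Both $x-1$ and $g(x)/(1-g(x))$ are therefore strictly negative, their ratio is strictly positive, and integration yields $G(1)>0$.

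The main obstacle is the rigorous justification of termwise integration on $[0,1]$: the geometric series $\sum_{k}(g(x))^{k}$ converges absolutely only where $|g(x)|<1$, and one checks that $|g(0)|=(e^{-\lambda}-(1-\lambda))/(1-\lambda)$ can exceed $1$ when $\lambda$ is close to $1$, so the naive interchange fails near $x=0$. The remedy is to bypass the geometric series and expand the analytic function $g(x)/(1-g(x))$ directly as a power series about $x=1$ with the Bell-polynomial coefficients supplied by the second method; dominated convergence on compact sub-intervals of $[0,1)$ together with continuity at $s\to1^{-}$ then legitimates the term-by-term integration that produces the stated formulas.
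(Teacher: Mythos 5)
Your main line coincides with the paper's own (very terse) proof: the paper likewise divides the expansion of $\frac{1}{1-g(x)}-1$ by $x-1$ and integrates term by term via $\int_0^s(x-1)^{n-1}dx=\frac{(s-1)^n-(-1)^n}{n}$, and it obtains the second form by the same identification $b_n=\lambda^n\sum_{k=1}^n\left(\frac{\lambda}{1-\lambda}\right)^k k!\,B_{n,k}(g_\bullet)$, so up to that point you have reproduced its argument. Where you genuinely differ is the positivity of $G(1)$: the paper argues through the alternating series, asserting without proof that $\frac{b_n}{n\,n!}$ is decreasing and grouping consecutive terms, whereas you use $G(1)=\int_0^1\frac{g(x)\,dx}{(x-1)(1-g(x))}$ and check the integrand's sign from $e^{y}>1+y$, which gives $g<0$ and $1-g>1$ on $(0,1)$; this is correct and arguably more robust, since it avoids the unproved monotonicity and does not depend on series manipulations at all. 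Your convergence caveat is also well taken — the paper's ``the integration is obvious'' skips exactly the point you raise, and indeed $|g(0)|>1$ for $\lambda$ near $1$ — but your remedy does not fully close the gap: the power series of $\frac{1}{1-g(x)}$ about $x=1$ has radius of convergence equal to the distance from $1$ to the nearest other zero of $e^{\lambda(x-1)}-x$, namely the second fixed point $x^*>1$, and $x^*\to1$ as $\lambda\to1^-$, so for $\lambda$ close to $1$ the series about $s=1$ need not converge at $s=0$ at all; the statements $G(0)=0$ and the value of $G(1)$ must then be read through the integral representation (or by analytic continuation), a limitation shared with the paper's own theorem rather than introduced by you.
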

The integration is obvious following the definition,
$$G(s)= \int^s_{x=0}\sum^\infty_{n=1}(\lambda)^n (x-1)^{n-1}dx\sum^n_{k=1}\left(\frac{\lambda}{1-\lambda}\right)^k \frac{k!}{n!}B_{n,k}(g_\bullet).$$
We only remark the boundary conditions, knowing the values of $B_{n,k}(g_\bullet)$, and especially $B_{1,1}(g_\bullet)=\frac{1}{2}  $,
$$ B(0)=1,\quad B(s)=(1-s)e^{G(s)},\quad G'(0)=1-e^\lambda (1-\lambda),\quad  G'(1)=\frac{\lambda^2}{2(1-\lambda)} .$$
Remark that the sequence $\left(\frac{b_n}{n. n!}, n=1, 2,..., \right)$  is decreasing and the alternating summation geves 
$$G(1)= \sum^\infty_{n=1}\frac{(-1)^{n-1} b_n}{n. n!}=\left(b_1- \frac{ b_2}{2. 2!}\right) +\left(\frac{b_3}{3. 3!}-\frac{ b_4}{4. 4!} \right)+...>0.$$
A graphical presentation of some results of $B(s)$ are shown in Figure \ref{fig:G12}.
\begin{figure}[t]
	\includegraphics[scale=.5]{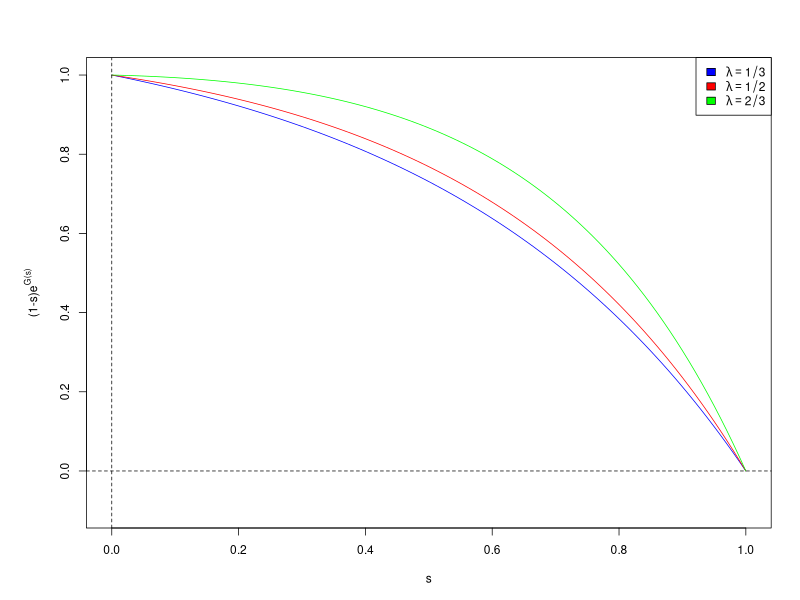} 
	\caption{%
		Graphics of $B(s)=(1-s)e^{G(s)}$ for $\lambda=1/3$(blue), $\lambda=1/2$(red) and $\lambda=2/3(green)$.
	}\label{fig:G12} 
\end{figure}
The comparison of the Figure \ref{fig:Bs} and Figure \ref{fig:G12} shows the equivalence of the graphics for the function B(s) calculated by different methods.

In the following theorem, the  integration by parts in the neighbourhood of $s=1$ shows the explicit representation of the Koenigs function $B(s)$ as the series expansion of the special function $\rm{ Ei}(s) $ and $\rm {Ein}(s)$.

\begin{theorem}
	The function $B(s)$ is defined in the neighbourhood of the point $s=1$  by the special function exponential integral $\rm{ Ei}(\lambda k (x-1)) $ as follows,
	
	$1.$ first method, $$ B(s)=\left( (1-s)e^{D(s)}  \right)^{1-\lambda}, \quad D(0)=0 .$$
	where the function $D(s)$ is
	$$D(s)=\sum^\infty_{n=1}\sum^n_{k=0}\binom{n}{k}(-1)^{n-k}\left\{\sum^n_{j=1}\frac{-(\lambda k)^{j-1}e^{\lambda k (s-1)} }{[n]_{j\downarrow} (s-1)^{n+1-j}} +\frac{(\lambda k)^n \rm{Ei}(\lambda k (s-1))}{n!}\right\}.$$
	
	$2.$ second method, $$ B(s)= (1-s)e^{G(s)},\quad G(0)=0,   $$	
	where the function $G(s)$ is equal to the following sum
	$$ \sum^\infty_{n=1}\left(\frac{\lambda}{1-\lambda}\right)^n\sum^n_{k=0} \binom{n}{k} \lambda (-1)^{n-k} \sum^k_{j=0} \binom{k}{j}  (-1)^{k-j}\left\{\sum^k_{l=1}\frac{-( j)^{l-1}e^{\lambda j (s-1)} }{[k]_{l \downarrow}(\lambda (s-1))^{k+1-l }} +\frac{( j)^k \rm{Ei}(\lambda j (s-1))}{k!}\right\}.$$
\end{theorem}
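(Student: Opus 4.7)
The plan is to reduce both representations to a single integration-by-parts identity. Integrating $k$ times by parts gives
\[
\int \frac{e^{\alpha y}}{y^{k+1}}\,dy \;=\; -\sum_{l=1}^{k} \frac{\alpha^{l-1} e^{\alpha y}}{[k]_{l\downarrow}\, y^{k+1-l}} \;+\; \frac{\alpha^{k}}{k!}\,\mathrm{Ei}(\alpha y),
\]
which is exactly the shape of the inner brace in both parts of the theorem. The remaining work is to re-expand the logarithmic derivative $B'(s)/B(s) = (\lambda-1)/(e^{\lambda(s-1)}-s)$ as a series of such terms so that the identity applies summand-by-summand.

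For the first representation $B(s) = \bigl((1-s)e^{D(s)}\bigr)^{1-\lambda}$, logarithmic differentiation of the ansatz compared with the Kolmogorov relation forces
\[
D'(s) \;=\; \frac{1}{1-s} \;-\; \frac{1}{e^{\lambda(s-1)} - s}.
\]
Combining fractions with $y = s-1$ gives $D'(s) = -(e^{\lambda y}-1)/\bigl(y(e^{\lambda y}-1-y)\bigr)$. Since $(e^{\lambda y}-1)/y \to \lambda < 1$ as $y\to 0$, a geometric series in $(e^{\lambda y}-1)/y$ yields the clean expansion
\[
D'(s) \;=\; \sum_{n=1}^{\infty} \frac{\bigl(e^{\lambda(s-1)}-1\bigr)^{n}}{(s-1)^{n+1}}.
\]
Expanding the numerator by the binomial theorem and integrating each summand via the model identity with $\alpha = \lambda k$ produces the stated series for $D(s)$, and the normalization $D(0)=0$ fixes the constants of integration.

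For the second representation $B(s) = (1-s)e^{G(s)}$, I would start from the integral form $G(s) = \int_{0}^{s}(x-1)^{-1}\sum_{k\geq 1} g(x)^{k}\,dx$ already derived in Section 3.2. Writing $g(x) = \tfrac{\lambda}{1-\lambda}\bigl(\alpha(x)-1\bigr)$ with $\alpha(x) = (e^{\lambda(x-1)}-1)/(\lambda(x-1))$, the binomial theorem applied first to $(\alpha-1)^{n}$ and then to $(e^{\lambda(x-1)}-1)^{k}$ yields a triple sum whose inner integrand is $e^{\lambda j(x-1)}/(x-1)^{k+1}$. The substitution $u = \lambda(x-1)$ reduces this to the model form with $\alpha = j$, and the same integration-by-parts identity then produces the bracketed $\mathrm{Ei}$ expression.

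The main obstacle is bookkeeping and convergence. Individual summands $e^{\lambda k(x-1)}/(x-1)^{n+1}$ are singular at $x=1$, whereas the original integrand is analytic there, so the pole cancellations only occur after the binomial sums are reassembled. To legitimize term-by-term integration and the exchange of summation orders I would restrict $s$ to a subinterval on which the geometric and binomial series converge absolutely and uniformly, and handle the degenerate boundary cases $k=0$ (first method) and $j=0$ (second method) under the conventions $0^{0}=1$ and $0^{m}=0$ for $m\geq 1$; with this reading the $\mathrm{Ei}$ term drops out and the non-exponential antiderivative $-1/\bigl(n(x-1)^{n}\bigr)$ is recovered within a single unified formula.
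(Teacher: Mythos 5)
Your proposal is correct and follows essentially the same route as the paper: the same geometric-series expansion in $(e^{\lambda(x-1)}-1)/(x-1)$ (respectively in $g(x)$), the same binomial expansions, and the same iterated integration-by-parts identity producing the $\mathrm{Ei}$ terms; deriving the first-method series by logarithmic differentiation of the ansatz rather than by rewriting the integrand of $\log B(s)$ is only a cosmetic difference. Your explicit treatment of the degenerate $k=0$ (resp.\ $j=0$) terms via the convention $0^{0}=1$, recovering the antiderivative $-1/\bigl(n(x-1)^{n}\bigr)$, and your attention to uniform convergence are slightly more careful than the paper's proof, which handles the $k=0$ term separately and leaves convergence implicit.
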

\begin{proof}
	$1.$  first method,  By definition
	$$\log B(s)=\int^s_{x=0}\frac{(1-\lambda) dx}{x-e^{\lambda(x-1)}}=\int^s_{x=0}\frac{(1-\lambda) dx}{x-1+1-e^{\lambda(x-1)}}= \int^s_{x=0}\frac{(1-\lambda)dx}{(x-1)\left(1-\frac{e^{\lambda(x-1)}-1}{x-1}\right)}$$
	$$=(1-\lambda)\log(1-s)+(1-\lambda)\sum^\infty_{n=1}\int^s_{x=0}\frac{(e^{\lambda (x-1)}-1)^n dx}{(x-1)^{n+1}}.$$
	The indefinite integral for $n=1,2,...$ and  Isaac Newton binomial expansion
	$$\int\frac{(e^{\lambda(x-1)}-1)^n dx}{(x-1)^{n+1}}= (-1)^n\int\frac{dx}{(x-1)^{n+1}} + \sum^n_{k=1}\binom{n}{k}(-1)^{n-k}\int\frac{e^{\lambda k(x-1)} dx}{(x-1)^{n+1}}$$
	$$=\frac{(-1)^n}{(-n)(x-1)^n} +\sum^n_{k=1}\binom{n}{k}(-1)^{n-k}\left\{\sum^n_{j=1}\frac{-(\lambda k)^{j-1}e^{\lambda k (x-1)} }{[n]_{j\downarrow} (x-1)^{n+1-j}}+\frac{(\lambda k)^n \rm{Ei}(\lambda k (x-1))}{n!}\right\}.$$
	
	$2.$  second method,  By definition, we remamber the function $g(s)$ and the representation
	
	$$\frac{ f'(1)}{f(x)}=\frac{\lambda-1}{e^{\lambda (x-1)}-x}=\frac{1}{(x-1)(1-g(x))},\quad g(x)=\frac{\lambda}{1-\lambda}\left(\frac{e^{\lambda( x-1)}-1}{\lambda(x-1)} -1\right)$$
	The function $G(s)$, is defined
	$$ G(s)=\int^s_{x=0}\left\{\frac{1}{x-1}\sum^\infty_{n=1}(g(x))^n\right\}dx,$$
	where
	$$ \frac{((g(x))^n}{x-1}=\left(\frac{\lambda}{1-\lambda}\right)^n\sum^n_{k=0} \binom{n}{k} \frac{\lambda (-1)^{n-k}}{ (\lambda (x-1))^{k+1}} \sum^k_{j=0} \binom{k}{j}  (-1)^{k-j}e^{\lambda (x-1)j}  $$
	Finaly,
	$$ G(s)=\sum^\infty_{n=1}\left(\frac{\lambda}{1-\lambda}\right)^n\sum^n_{k=0} \binom{n}{k} \lambda (-1)^{n-k} \sum^k_{j=0} \binom{k}{j}  (-1)^{k-j}\int^s_{x=0}\frac{e^{\lambda (x-1)j} dx}{ (\lambda (x-1))^{k+1}} .  $$
	
\end{proof}
In particular, for $n=1$
$$\int^s_{x=0}\frac{g(x)dx}{(x-1)}=\int^s_{x=0}\left(\frac{e^{\lambda(x-1)}-1-\lambda(x-1)}{(1-\lambda)(x-1)^2}\right)dx $$
$$=\left(\frac{e^{\lambda(s-1)}-1}{(1-\lambda)}\right)\left(\frac{-1}{s-1}\right)- \frac{\lambda}{1-\lambda}\rm{Ein}(\lambda(1-s)).  $$
The function $\rm{Ei(x)} $ is negative and decreasing in the interval $ -\infty <x<0$. The values of the function $\rm{ Ei(\lambda k( s-1))}$ don't touch the branch point zero, $\rm{Ei(x)},  x \neq 0$. The initial condition $G(0)$ is attained because of the alternating summation. 
The graphical presentation of $G(s)$ and  $B(s)=(1-s)e^{G(s)}$ is shown in Fig. \ref{fig:GS}.
\begin{figure}[t]
	\includegraphics[scale=.5]{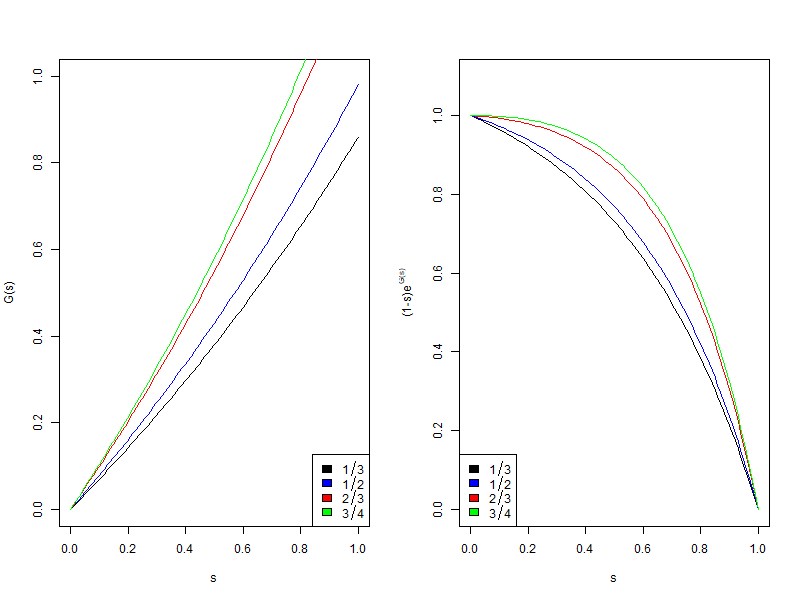} 
	\caption{%
		Graphics of $G(s)$ (left) and $B(s)=(1-s)e^{G(s)}$ (right).
	}\label{fig:GS} 
\end{figure}	

\section{Backward Kolmogrov equation for the Poisson reproduction in critical case}
Now, we consider the critical case, where $ \lambda=1,\quad h(s)=e^{s-1}.$
The equation $ h(s)=s,\quad e^s=e s$ has a double root $ s=1$ and the Denjoy-Wolfs points given by $ -1=W(-e^{-1})$ coincide.
\subsection{In the neighborhood of the point $s=0$}
Let us consider the function $U(s), U(0)=0,$ defined by the integral for $ \lambda=1$ as
$$ U(s)=\int^s_{x=0} \frac{K dx}{f(x)}=\int^s_{x=0} \frac{dx}{e^{x-1}-x},\quad U'(s)=\frac{1}{e^{s-1}-s}\quad U'(0)=\frac{1}{e^{-1}}=e.$$
In the same notation $p(s)$ and $a(s)$ as for the subcritical case, we take $ \lambda=1$.
Then
$$-a_n=n(1-e)a_{n-1}+\sum^n_{k=2}\binom{n}{k} a_{n-k},\quad a_n=\sum^n_{k=1}B_{n,k}(p_\bullet)(-1)^k k!.$$
The function $p(s)$ is the exponential generating function of the sequence
$$ p_\bullet=(1-e, 1,1,...)=(1,1,...)+(-e,0,0,...)$$
In particular,
$$a_1=-p_1=e-1,\quad a_2=-1+2!(1-e)^2,\quad a_3=-1+3!(1-e)-3!(1-e)^3, $$
$$a_4=-1+4.2!(1-e)+3!(1-6(1-e)^2)+4!(1-e)^4, $$
$$a_5=-1+2!\{5(1-e)+10\}-3!\{10(1-e)^2+15(1-e)\}+4!10(1-e)^3-5!(1-e)^5. $$
Following the previous Example \ref{Example1}, the tabulated values of $ B_{n,k}(p_\bullet)$ can be obtained after direct substitution of $\lambda=1$ in the matrix (\ref{eq:lambda-matrix}).
$$U(s)=e\int^s_{x=0}\left\{1+\sum^\infty_{k=1}a_k \frac{x^k}{k!} \right\}dx =e \left\{s+\sum^\infty_{k=2}a_{k-1} \frac{s^{k} }{k!} \right\}. $$
It means,
$$U(0)=0,\quad U'(0)=e , \quad U^{(n)}(0)= e  a_{n-1},\quad n=2,3,....$$
Using the Faa Di Bruno formula, we confirm the derivative of the Koenigs function $C(s)$ from the previous section.
$$C(s)=\exp(U(s)),\quad C^{(n)}(s)=\exp(U(s))\sum^n_{k=0}B_{n,k}(U'(s), U''(s),...). $$
Finally, we note that for them is valid that, $C(0)=1, C'(0)=e$, and
$$C'(0)=U'(0),\quad C''(0)=(U'(0))^2+U''(0),\quad C^{(3)}(0)=(U'(0))^3+3U'(0)U''(0)+U^{(3)}(0). $$
Also, see the Example $7$, \cite{AP}, (with a  difference in the notation).
\begin{remark}\rm
Remamber the functions
$$A(s)=\int^s_{x=0} \frac{ (\lambda -1)dx}{e^{-\lambda}e^{\lambda x}-x},\quad U(s)=\int^s_{x=0} \frac{dx}{e^{x-1}-x}.$$
Denote the function $U_{\lambda}(s)=\frac{A(s) }{\lambda-1}$ as follows,
 $$U_{\lambda}(s)=\frac{A(s) }{\lambda-1},\quad U_{\lambda}(s)=\int^s_{x=0} \frac{dx}{e^{-\lambda}e^{\lambda x}-x},$$
Then for the critical case, the function $U_{1}(s):=U(s)$.
\end{remark}
The graphics of the functions $U_\lambda(s)$ and $U(s)$ and respectively  $C_\lambda (s)=e^{U_\lambda(s)}$ and $C(s)=e^{U(s)}$ are shown in Figure \ref*{fig:CS}.

\begin{figure}[t]
	\includegraphics[scale=.5]{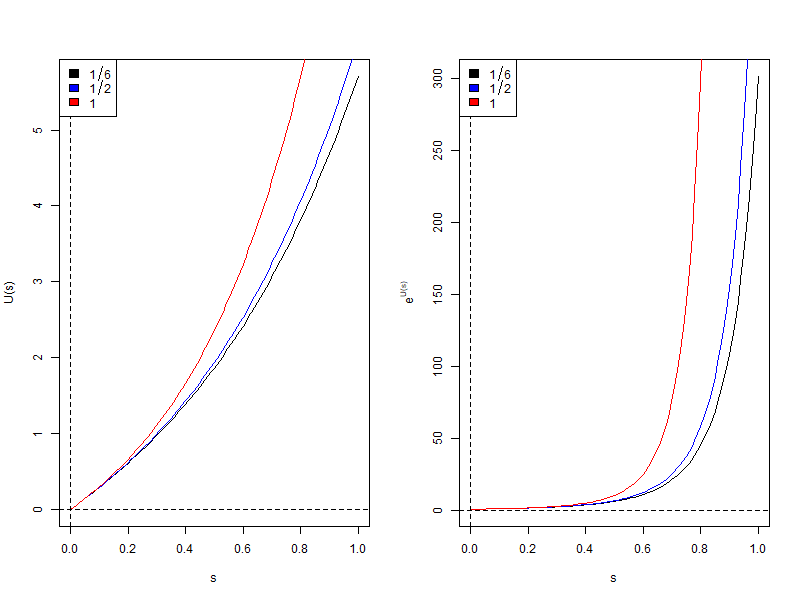} 
	\caption{%
		Graphics of $U_\lambda(s)$ and $U(s), U(0)=0, U'(0)=e$ (left) and respectively  $C_\lambda (s)=e^{U_\lambda(s)}$ and $C(s)=e^{U(s)}, C(0)=1$ (right). The dashed line on right figure points out the minimal value at $C(0)=1$. 
	}\label{fig:CS} 
\end{figure}

\begin{theorem}
	The explicit form of the function $U(s)$ for $\lambda=1$ is given by integration of the series expansion,
	$$U(s)=e\sum^\infty_{k=0}(-1)^k\sum^k_{j=0}\binom{k}{j}{e^{ j s}}(-1)^{k-j} \sum^{k-j}_{i=1}\left(\frac{e }{ j}\right)^i (1+se)^{k-j-i}[k-j]_{i\downarrow}$$
	$$-e\sum^\infty_{k=0}(-1)^k\sum^k_{j=0}\binom{k}{j}(-1)^{k-j} \sum^{k-j}_{i=1}\left(\frac{e }{j}\right)^i [k-j]_{i\downarrow}.$$
\end{theorem}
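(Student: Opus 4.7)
The plan is to specialise the argument of Theorem~3.1 to the critical parameter $\lambda=1$. By the remark preceding the statement, $U(s)=U_{\lambda}(s)\big|_{\lambda=1}$, where $U_{\lambda}(s)=A(s)/(\lambda-1)$; dividing the explicit formula of Theorem~3.1 by $(\lambda-1)$ replaces the prefactor $(\lambda-1)e^{\lambda}$ by $e^{\lambda}$, and then setting $\lambda=1$ produces the leading $e$ together with the factors $e^{js}$, $(e/j)^{i}$, $(1+se)^{k-j-i}$ and $[k-j]_{i\downarrow}$ appearing in the statement. Since no step of the derivation of Theorem~3.1 relied on strict subcriticality, only on a neighbourhood of the origin in which the reciprocal expansion converges, the argument transfers without modification to $\lambda=1$.

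Unrolling it in the critical setting, one first writes $p(x)=e^{x}-(1+xe)$, noting $p(0)=0$, so that on a small disk about the origin
$$\frac{1}{e^{x-1}-x}=\frac{e}{1+p(x)}=e\sum_{k=0}^{\infty}(-1)^{k}(p(x))^{k}.$$
The binomial theorem yields $(p(x))^{k}=\sum_{j=0}^{k}\binom{k}{j}(-1)^{k-j}e^{jx}(1+xe)^{k-j}$, and termwise integration on $[0,s]$ reduces the claim to evaluating $\int_{0}^{s}e^{jx}(1+xe)^{k-j}\,dx$ in closed form. For $j\ge 1$ this is done by iterated integration by parts, differentiating the polynomial part (each step drops the exponent by one and introduces a factor $e$) while antidifferentiating the exponential (each step introduces a factor $1/j$). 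After $k-j$ iterations the polynomial is exhausted, producing a finite sum over $i=1,\ldots,k-j$ whose generic term is $(e/j)^{i}[k-j]_{i\downarrow}e^{jx}(1+xe)^{k-j-i}$; substituting the endpoints $x=s$ and $x=0$ separates the expression into the two displayed sums.

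The main obstacle is the index $j=0$, where the integration-by-parts recipe divides by zero. In that case the integrand reduces to $(1+xe)^{k}$, which integrates directly to $[(1+se)^{k+1}-1]/(e(k+1))$. This contribution must either be split off and added separately, or one must verify that in the double series the formal $j=0$ singularities cancel across the two displayed sums. A secondary technical point is the interchange of summation and integration, which follows from absolute convergence on any disk on which $|p(x)|<1$; since $p(0)=0$, this holds in a neighbourhood of $s=0$ and justifies the term-by-term manipulations above.
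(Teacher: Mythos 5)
Your proposal follows essentially the same route as the paper: the paper establishes the subcritical formula (the theorem for $A(s)$, $0<\lambda<1$) by writing $\frac{1}{h(x)-x}=\frac{e^{\lambda}}{1+p(x)}$, expanding the geometric series, binomial-expanding $(p(x))^{k}$, and integrating by parts via $\int_0^s(1+xe^{\lambda})^{k-j}\,d\bigl(e^{\lambda jx}/(\lambda j)\bigr)$, and the critical statement is obtained exactly as you describe, by the substitution $\lambda=1$ through $U_{\lambda}(s)=A(s)/(\lambda-1)$. Your explicit handling of the $j=0$ terms (where $(e/j)^{i}$ is undefined and the integral of $(1+xe)^{k}$ must be computed directly) is a point the paper's own argument passes over silently, so it is a welcome refinement rather than a departure from its method.
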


Compare with the geometric branching critical case, \cite{TM}, and \cite{TMT},
$m=1$ and $ 0<s<1$.
$$\frac{ K}{f(x)}=\frac{1}{(x-1)^2}+\frac{1}{1-x},\quad U(s)=-\log(1-s)+\frac{1}{1-s },\quad s\neq 1 .$$
\subsection{In the neighborhood of the point $s=1$}
Consider the following decomposition in the neighborhood of $s=1$,
$$f(s)=K\left(\sum^\infty_{n=0}\frac{(s-1)^n}{n!} -s\right)=K\left(1+(s-1)+\sum^\infty_{n=2}\frac{(s-1)^n}{n!}-s\right)=K\sum^\infty_{n=2}\frac{(s-1)^n}{n!}. $$
$$\lambda=1, \quad \log C(s)=\int^s_{x=0} \frac{ Kdx}{f(x)},\quad f'(1)=0, $$
$$\frac{ K}{f(x)}=\frac{2}{(x-1)^2(1+2\varphi(x))},\quad \varphi(x)=\sum^\infty_{n=1}\frac{1}{(n+1)(n+2)} \frac{(x-1)^n}{n!}.$$
Then, we consider the reciprocal power series of the functions $c(s)$ and $ \varphi(s)$ defined by the following
$$\frac{K}{f(s)}=\frac{1}{\sum^\infty_{n=2}\frac{(s-1)^n}{n!}}=\frac{2}{(s-1)^2}\left( \frac{1}{1+2\varphi(s)}\right)=\frac{2(1+c(s))}{(s-1)^2} $$
The functions $\varphi(s)$ and $c(s)$ are defined by their derivatives,
$$\varphi(s)=\sum^\infty_{n=1}\varphi_n\frac{(s-1)^n}{n!},\quad c(s)= \sum^\infty_{n=1}c_n\frac{(s-1)^n}{n!}.$$
The general Leibniz rule applied on the equality $$(1+2\varphi(s))\{1+c(s)\}=1$$ gives the following recurrent relation for $ n=1,2,...$, and $\varphi(1)=0,c(1)=0 $,
$$\sum^n_{k=0}\binom{n}{k}2\varphi_k c_{n-k}=0,\quad c_n=-2 \sum^n_{k=1}\binom{n}{k} \frac{c_{n-k}}{(k+1)(k+2)},$$
where
$$ \varphi_0=c_0=1,\quad \varphi_1=\frac{1}{6}, \quad \varphi_2=\frac{1}{12},...,\quad \varphi_k=\frac{c_{n-k}}{(k+1)(k+2)},... .$$
The sequence $\varphi_\bullet$ is defined by
$$\varphi_\bullet= \left(\frac{1}{6}, \frac{1}{12},...,\frac{1}{(k+1)(k+1)},...\right). $$
The coefficients $c_k$ are given by the relation,
$$c_0=1,\quad c_1=\frac{-1}{3},\quad c_2=\frac{1}{3}\frac{1}{3!},\quad c_3=\frac{1}{3!}\frac{1}{15},\quad c_4=\frac{-1}{3! 45},\quad c_5=\frac{-5}{3!21}\frac{1}{9},...$$
Then, the integral on the interval $ 0<s<1$ is
$$ \int\frac{K ds}{f(s)}=\int\left\{ \frac{2}{(s-1)^2}-\frac{1}{3(s-1)}+\frac{1}{3}\frac{1}{3!}\frac{2}{2!}+ \sum^\infty_{k=3}\frac{2c_k}{k!}(s-1)^{k-2}\right\} d(s-1) $$
$$ =\frac{-2}{s-1}-\frac{1}{3}\log|s-1|+ \frac{(s-1)}{18}+\frac{c_3}{3!}(s-1)^2+ \sum^\infty_{k=4}\frac{2c_k(s-1)^{k-1}}{k!(k-1)}.$$
Then
$$U(s) =\log\left\{\frac{1}{ \sqrt[3]{1-s}}\exp\left(\frac{2}{1-s} -\frac{1-s}{18}\right)\right\}+\sum^\infty_{k=3}\frac{c_k(s-1)^{k-1}}{k!(k-1)}. $$
and
$$C(s)=e^{U(s)}= \left\{\frac{1}{ \sqrt[3]{1-s}}\right\}\exp\left(\frac{2}{1-s} -\frac{1-s}{18}\right)\exp\left\{\sum^\infty_{k=3}\frac{c_k(s-1)^{k-1}}{k!(k-1)}\right\}.$$
The multiplicative constant insuring boundary condition $C(0)=1$ is calculated by the Bell polynomials $B_{n.k}(\varphi_{\bullet})$.

\section{Conclusion}
In this work, we are focused on MBP with Poisson reproduction of particles, applying three different methods proven equivalent. The obtained solutions outline the mechanism of efficient computation involving branching processes with Poisson reproduction. The graphics are generated using the reciprocal power series and recursive relation. The used series expansions are generated by applying combinatorial methods. It achieves a very high precision with a moderate number of terms, well below 100. The exponential Bell polynomials apply to the arrangement of the constants and derivatives.

The proposed methods have relations to many other applications. One of the studied models relies on the special function $\rm{Ei(x)}$, which is not new to many physical models. It was noted that for $ 0<\lambda <1/6$, the graphic of the function  $B(s)$ is almost equal to $1-s$. It means the process is a near-linear pure death process. Then, the solution to the used Schr\"{o}der's equation is
$$ B(F(t,s))=M(t) B(s),\quad 1-F(t,s)=M(t)(1-s),\quad F(t,s,)=1-M(t)+M(t) s.$$

Finally, we note that the continuity of the Koenigs function $B(s)$ at the point $s=1$ is confirmed by the graphics for different values of the parameter $0<\lambda <1$. 

\section*{Acknowledgments}

Assen Tchorbadjieff acknowledges partial support from the Centre of Excellence in Informatics and ICT under Grant No.~BG16RFPR002-1.014-0018-C01, financed by the Research, Innovation and Digitalization for Smart Transformation Programme 2021--2027 and co-financed by the European Union.


\begin{thebibliography}{99}
	
	\bibitem{AN}
	K.~B.~Athreya and P.~E.~Ney,
	\emph{Branching Processes},
	Springer, New York, 1972.
	
	\bibitem{DCh}
	V.~I.~Dorogov and V.~P.~Chistyakov,
	\emph{Probabilistic Models of the Transformation of Particles},
	Nauka, Moscow, 1988 (in Russian).
	
	\bibitem{H}
	T.~E.~Harris,
	\emph{The Theory of Branching Processes},
	Springer, Berlin, 1963.
	
	\bibitem{P}
	I.~Pazsit and L.~P\'al,
	\emph{Neutron Fluctuations: A Treatise on the Physics of Branching Processes},
	Elsevier Science B.V., Amsterdam, 2008.
	
	\bibitem{S}
	B.~A.~Sevasyanov,
	\emph{Branching Processes},
	Nauka, Moscow, 1971 (in Russian; German transl.
	\emph{Verzweigungsprozesse}, R. Oldenbourg Verlag, Munich, 1975).
	
	\bibitem{Sch}
	E.~Schr\"oder,
	\"Uber iterierte Funktionen,
	\emph{Math. Ann.} \textbf{3} (1871), 296--322.
	
	\bibitem{El}
	M.~Elin, V.~Goryainov, S.~Reich, and D.~Shoikhet,
	Fractional iteration and functional equations for functions analytic in the unit disk,
	\emph{Comput. Methods Funct. Theory} \textbf{2} (2002), no.~2, 353--366.
	
	\bibitem{KMc}
	S.~Karlin and J.~McGregor,
	Embedding iterates of analytic functions with two fixed points into continuous semigroups,
	\emph{Trans. Amer. Math. Soc.} \textbf{132} (1968), 137--145.
	
	\bibitem{Ko}
	G.~Koenigs,
	Recherches sur les int\'egrales de certaines \'equations fonctionnelles,
	\emph{Ann. Sci. \'Ecole Norm. Sup. (3)} \textbf{1} (1884), 2--41.
	
	\bibitem{VG}
	V.~V.~Goryainov,
	Semigroups of analytic functions in analysis and applications,
	\emph{Russian Math. Surveys} \textbf{67} (2012), no.~6, 975--1021.
	
	\bibitem{Ku}
	M.~Kuczma, B.~Choczewski, and R.~Ger,
	\emph{Iterative Functional Equations},
	Encyclopedia of Mathematics and its Applications, vol.~32,
	Cambridge Univ. Press, 1990.
	
	\bibitem{AP}
	A.~Pakes,
	On the stationary measure for Markov branching processes,
	\emph{Mathematics} \textbf{13} (2025), no.~11, 1802.
	
	\bibitem{TMT}
	A.~Tchorbadjieff and P.~Mayster,
	Geometric branching reproduction Markov processes,
	\emph{Modern Stochastics: Theory and Applications} \textbf{7} (2020), no.~4, 357--378.
	
	\bibitem{TM}
	A.~Tchorbadjieff and P.~Mayster,
	Factorial moments of the critical Markov branching process with geometric reproduction of particles,
	\emph{Modern Stochastics: Theory and Applications} \textbf{9} (2022), no.~2, 229--244.
	
	\bibitem{TMM}
	A.~Tchorbadjieff and P.~Mayster,
	Wright function in the solution to the Kolmogorov equation of the Markov branching process with geometric reproduction of particles,
	\emph{Lithuanian Math. J.} \textbf{63} (2023), 223--240.
	
	\bibitem{WW}
	W.~Wang and T.~Wang,
	General identities on Bell polynomials,
	\emph{Comput. Math. Appl.} \textbf{58} (2009), no.~1, 104--118.
	
	\bibitem{Fe}
	F.~Qi, D.-W.~Niu, D.~Lim, and Y.-H.~Yao,
	Special values of the Bell polynomials of the second kind for some sequences and functions,
	\emph{J. Math. Anal. Appl.} \textbf{491} (2020), 124382.
	
	\bibitem{W}
	E.~M.~Wright,
	Iteration of the exponential function,
	\emph{Quart. J. Math.} \textbf{18} (1947), no.~1, 228--235.
	
	\bibitem{JP}
	P.~Jagers,
	\emph{Branching Processes with Biological Applications},
	John Wiley \& Sons, 1975.
	
	\bibitem{MC}
	C.~Mode,
	\emph{Stochastic Processes in Demography and Their Computer Implementation},
	Springer, 1985.
	
	\bibitem{KM}
	M.~Kimmel and D.~Axelrod,
	\emph{Branching Processes in Biology},
	Springer, 2002.
	
	\bibitem{HJV}
	P.~Haccou, P.~Jagers, and V.~Vatutin,
	\emph{Branching Processes: Variation, Growth and Extinction of Populations},
	Cambridge Univ. Press, 2007.
	
	\bibitem{JC}
	J.~T.~Conway,
	Indefinite integrals involving the exponential integral function,
	\emph{Integral Transforms Spec. Funct.} \textbf{33} (2022), no.~1, 1--15.
	
	\bibitem{CM}
	L.~Comtet,
	\emph{Advanced Combinatorics: The Art of Finite and Infinite Expansions},
	D. Reidel Publishing Company, Dordrecht, 1974.
	
\end{thebibliography}
\end{document}